\numberwithin{equation}{section}
\renewcommand\d{\partial}
\renewcommand\b{\beta}
\def\eps{\varepsilon }
\renewcommand\d{\partial}
\renewcommand\b{\beta}
\newcommand\R{\mathbb R}
\newcommand\C{\mathbb C}
\def\eps{\varepsilon}
\newcommand\br{\begin{remark}}
\newcommand\er{\end{remark}}
\newcommand\bp{\begin{pmatrix}}
\newcommand\ep{\end{pmatrix}}
\newcommand{\be}{\begin{equation}}
\newcommand{\ee}{\end{equation}}
\newcommand\ba{\begin{equation}\begin{aligned}}
\newcommand\ea{\end{aligned}\end{equation}}
\newcommand{\bap}{\begin{app}}
\newcommand{\eap}{\end{app}}
\newcommand{\begs}{\begin{exams}}
\newcommand{\eegs}{\end{exams}}
\newcommand{\beg}{\begin{example}}
\newcommand{\eeg}{\end{example}}
\newcommand{\bpr}{\begin{proposition}}
\newcommand{\epr}{\end{proposition}}
\newcommand{\bt}{\begin{theorem}}
\newcommand{\et}{\end{theorem}}
\newcommand{\bc}{\begin{corollary}}
\newcommand{\ec}{\end{corollary}}
\newcommand{\bl}{\begin{lemma}}
\newcommand{\el}{\end{lemma}}
\newcommand{\bd}{\begin{definition}}
\newcommand{\ed}{\end{definition}}
\newcommand{\brs}{\begin{remarks}}
\newcommand{\ers}{\end{remarks}}
\newcommand{\const}{\text{\rm constant}}
\newcommand{\Id}{{\rm Id }}
\DeclareMathOperator{\sgn}{sgn}
\newtheorem{theorem}{Theorem}[section]
\newtheorem{proposition}[theorem]{Proposition}
\newtheorem{corollary}[theorem]{Corollary}
\newtheorem{lemma}[theorem]{Lemma}
\theoremstyle{remark}
\newtheorem{remark}[theorem]{Remark}
\theoremstyle{definition}
\newtheorem{definition}[theorem]{Definition}
\newtheorem{example}[theorem]{Example}
\newcommand{\beq}{\begin{equation}}
\newcommand{\eeq}{\end{equation}}
\title{Spectral stability of hydraulic shock profiles}
\author{Alim Sukhtayev}
\address{Miami University, Oxford, OH 45056}
\email{sukhtaa@miamioh.edu}
\thanks{}
\author{Zhao Yang}
\address{Indiana University, Bloomington, IN 47405}
\email{yangzha@indiana.edu}
\thanks{Research of Z.Y. was supported by the College of Arts and Sciences Dissertation Year Fellowship}
\author{Kevin Zumbrun}
\address{Indiana University, Bloomington, IN 47405}
\email{kzumbrun@indiana.edu}
\thanks{Research of K.Z. was partially supported
under NSF grant no. DMS-1400555}
\begin{document}

\begin{abstract}
By reduction to a generalized Sturm Liouville problem, we establish spectral stability of
hydraulic shock profiles of the Saint-Venant equations for inclined shallow-water flow,
over the full parameter range of their existence, for both smooth-type profiles
and discontinuous-type profiles containing subshocks.  Together with work of Mascia-Zumbrun and
Yang-Zumbrun, this
yields linear and nonlinear $H^2\cap L^1 \to H^2$ stability with sharp rates of decay in $L^p$, $p\geq 2$,
the first complete
stability results for large-amplitude shock profiles 
of a hyperbolic relaxation system. 
\end{abstract}

\date{\today}
\maketitle

{\it Keywords}: shallow water equations; relaxation shock; subshock; hyperbolic balance laws.

{\it 2010 MSC}:  35B35, 35L67, 35Q35, 35P15.

\section{Introduction}\label{s:intro}
In this paper, building on work of \cite{YZ,JNRYZ}, we study spectral stability of hydraulic shock profiles 
of the (inviscid) Saint-Venant equations for inclined shallow-water flow:
\ba \label{sv}
\d_th+\d_xq&=0,\\
\d_tq+\d_{x}\left(\frac{q^2}{h}+\frac{h^2}{2F^2}\right)&=h-\frac{|q|q}{h^2},
\ea
where $h$ denotes fluid height; $q=hu$ total flow, with $u$ fluid velocity; and
$F>0$ the {\it Froude number}, a nondimensional parameter depending on reference height/velocity and inclination.

Equations \eqref{sv} are the standard ones used in the hydraulic engineering literature to describe flow in a dam spillway 
or other inclined channel; see \cite{BM,Je,Br1,Br2,Dr,JNRYZ,YZ} and references therein.
They have the form of a $2\times 2$ {\it hyperbolic system of balance laws} \cite{L,W,Bre,Da},
with relaxation terms $h-\frac{|q|q}{h^2}$ on the righthand side of \eqref{sv}(ii) representing the balance between
gravitational force and turbulent bottom friction 
(modeled following Chezy's formula as proportional to velocity squared \cite{Dr,BM}).
The associated {\it equlibrium} (or ``relaxed'') {\it model}, obtained by setting $q=q_*(h):= h^{3/2}$ so that gravity and
friction exactly cancel, is the {\it scalar conservation law}
\be\label{CE}
\d_t h + \d_x  q_*(h)=0,
\ee
a generalized Burgers equation.

As noted by Jeffreys \cite{Je}, there is an important distinction between the {\it hydrodynamically stable} case $0<F<2$
and the {\it hydrodynamically unstable} case $F>2$.
In the former case, the subcharacteristic condition of Whitham is satisfied \cite{W,L}, and constant, equilibirum solutions
$(h,q)\equiv (h_0, q_*(h_0)$ of \eqref{sv} are stable under perturbation (the definition of hydrodynamic
stability); moreover, the behavior under nonlinear perturbation is approximately governed by \eqref{CE}.
For $F>2$, constant solutions are always unstable and behavior is quite different, featuring pattern formation and
onset of complex dynamics \cite{Dr}.
Indeed, this dichotomy between hydrodynamically stable and unstable regimes
is typical of general relaxation systems \cite{W,L,JK}.

Following \cite{YZ}, we here focus on the hydrodynamically stable case $0<F<2$, and associated {\it hydraulic shock profile} solutions 
\be\label{prof}
(h,q)(x,t)= (H,Q)(x-ct), \quad \lim_{z\to - \infty}(H,Q)(z)= (H_L,Q_L), \; \lim_{z\to - \infty}(H,Q)(z)= (H_R,Q_R), 
\ee
analogous to shock wave solutions of the equlibrium system \eqref{CE}.
These are piecwise smooth traveling-wave solutions satisfying the Rankine-Hugoniot jump and Lax entropy conditions 
\cite{Sm,Da,Bre,La} at any discontinuities. Their existence theory reduces to the study of an explicitly solvable 
scalar ODE with polynomial coefficients \cite{YZ}; it is described completely as follows.

\bpr[\cite{YZ}]\label{existprop}
Let $(H_L,H_R, c)$ be a triple for which there exists an entropy-admissible shock solution in the sense of Lax \cite{La}
with speed $c$ of \eqref{CE} connecting left state $H_L$ to right state $H_R$, i.e., $H_L>H_R>0$ and
$c[H]=[q_*(H)]$.
Then, there exists a corresponding hydraulic shock profile \eqref{prof} with $Q_L=q_*(H_L)$ and $Q_R=q_*(H_R)$
precisely if $0<F<2$.
The profile is smooth for $H_L> H_R> H_L \frac{2F^2}{1+2F+\sqrt{1+4F}}$, and nondegenerate in the sense that $c$ is not a
characteristic speed of \eqref{sv} at any point along the profile.
For $0<H_R< H_L \frac{2F^2}{1+2F+\sqrt{1+4F}}$, the profile is nondegenerate and piecewise smooth, with a single
discontinuity consisting of an entropy-admissible shock of \eqref{sv}.
At the critical value $H_R= H_L \frac{2F^2}{1+2F+\sqrt{1+4F}}$, $H_R$ is characteristic,
and there exists a degenerate profile that is continuous but not smooth, with discontinuous derivative at $H_R$.
\epr

Typical profiles of each type (smooth, degenerate, piecewise smooth) are displayed in Figure \ref{profile}.

\begin{figure}
\begin{center}
\includegraphics[scale=0.32]{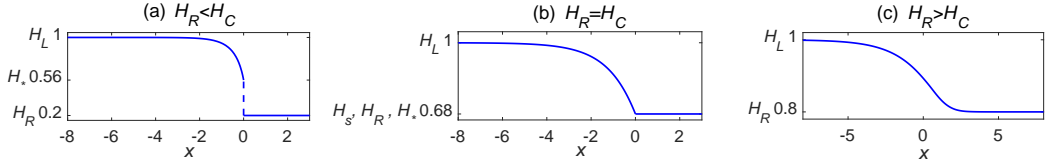}
\end{center}
\caption{Hydraulic shock profiles with $F=1.5$, $H_L=1$ and (a) $H_R=0.2$; (b) $H_R=\frac{9}{8+2\sqrt{7}}$; (c) $H_R=0.8$,
reproduced from \cite{YZ} with permission of the authors.}
\label{profile}
\end{figure}

\subsection{Main results}\label{s:results} We now turn to the discussion of stability, and our main results.
Linearizing \eqref{sv} about a smooth profile $(H,Q)$ following \cite{MZ}, we obtain eigenvalue equations
\be 
\label{syseval}
Av'=(E-\lambda \Id-A_x)v,
\ee
where
\ba
A&=\left[\begin{array}{cc} -c & 1\\ \frac{H}{F^2}-\frac{Q^2}{H^2} & \frac{2Q}{H}-c \end{array}\right],\quad E=\left[\begin{array}{cc} 0 & 0\\ \frac{2Q^2}{H^3}+1 & -\frac{2Q}{H^2} \end{array}\right].\\
\ea
It is shown in \cite{YZ} that essential spectrum of $\mathcal{L}:= -A\partial_x- \partial_xA +E$ is confined to
$\{\lambda:  \Re \lambda <0\}\cup \{0\}$, with an embedded eigenvalue at $\lambda=0$.
Moreover, it is shown that the embedded eigenvalue at $\lambda=0$ is of multiplicity one in a 
generalized sense defined in terms of an associated Evans function defined as in \cite{AGJ,GZ,MZ}.
It follows by the general theory of \cite{MZ2} relating generalized, or Evans-type, spectral stability to linearized and nonlinear stability, that smooth hydraulic shock profiles are nonlinearly orbitally stable so long as they are
{\it weakly spectrally stable} in the sense that there exist no decaying solutions of \eqref{syseval}
on $\{\lambda:  \Re \lambda \geq 0\}\setminus \{0\}$.

The discontinuous case is more complicated, involving a free boundary with transmission/evolution conditions given by the Rankine-Hugoniot jump conditions.
However, following the approach of Erpenbeck-Majda \cite{Er1,Er2,Ma} for the study of such problems in the context
of shocks and detonations, one may deduce a generalized eigenproblem consisting of the same ODE \eqref{syseval},
but posed on the negative half-line $x\in (-\infty,0)$ with boundary condition 
\be \label{bc}
[\lambda\overline{W}-R(\overline{W})]_\perp \cdot A(0^-)v(0^-)=0,
\ee
where $\overline{W}:=(H,Q)^T$ and $[h]:= h(0^+)-h(0^-)$ denotes jump in $h$ across $x=0$; see \cite{YZ} for further details.
Similarly as in the smooth case, it is shown in \cite{YZ} that
essential spectrum of $\mathcal{L}$ with boundary condition \eqref{bc}
is confined to $\{\lambda:  \Re \lambda <0\}\cup \{0\}$, with an embedded eigenvalue at $\lambda=0$, of multiplicity
one in a generalized sense defined by an associated Evans-Lopatinsky function.
It follows by the general theory of \cite{YZ} that discontinuous hydraulic shock profiles are 
nonlinearly orbitally stable so long as they are weakly spectrally stable in the sense that there exist 
no decaying solutions of \eqref{syseval}-\eqref{bc} on $\{\lambda:  \Re \lambda \geq 0\}\setminus \{0\}$.

In summary, by the analytical results of \cite{MZ2,YZ}, the question of nonlinear stability of hydraulic shock profiles 
has been reduced in both smooth and discontinuous case 
to determination of weak spectral stability, or nonexistence of eigenvalues $\lambda\neq 0$
with $\Re \lambda \geq 0$ of eigenvalue problem \eqref{syseval} on the whole- or half-line, respectively.
The weak spectral stability condition was verified numerically in \cite{YZ} for both smooth and piecewise smooth profiles
by extensive Evans/Evans-Lopatinsky function computations across the entire parameter range of existence,
indicating linearized and nonlinear stability.  However, the computation was done with ordinary machine
rather than interval arithmetic, and this conclusion though decisive falls short of rigorous proof.

In the present work, we establish the following theorem verifying analytically the conclusions obtained numerically
in \cite{YZ}, from which nonlinear stability then follows by the results of \cite{MZ2,YZ}.

\bt \label{main}
Nondegenerate hydraulic shock profiles of the Saint-Venant equations \eqref{sv} are weakly spectrally stable,
across the entire range of existence described in Proposition \ref{existprop}.
\et

\begin{proof}
This follows by Corollaries \ref{smoothstab} and \ref{discontstab} below.
\end{proof}

\bc[\cite{MZ2,YZ}]\label{maincor}
Nondegenerate hydraulic shock profiles of \eqref{sv} are linearly and nonlinearly orbitally stable.
Specifically, let $\overline{W}=(H,Q)$ be a hydraulic shock profile \eqref{prof},
and $v_0$ be an initial perturbation supported away from any discontinuity of $\overline{W}$ and
of norm $\eps$ sufficiently small in $H^{s}\cap L^1$, $s\geq 2$. 
Then, for initial data $\tilde W_0:=\overline{W}_0+v_0$, there exists a global solution $\tilde W$ of \eqref{sv}
and a phase shift $\eta$, satisfying for $2\leq p\leq \infty$: 
\ba\label{mainests}
\|\tilde W(\cdot,t)-\overline{W}(\cdot-ct +\eta(t))\|_{H^s}&\leq C\eps (1+t)^{-1/4},\\
\|\tilde W(\cdot,t)-\overline{W}(\cdot-ct +\eta(t))\|_{L^p}&\leq C\eps (1+t)^{-(1/2)(1-1/p)}, \\
|\dot \eta(t)|&\leq C\eps (1+t)^{-(1/2)}.
\ea
\ec

Theorem \ref{main} and Corollary \ref{maincor} together represent the first complete analytical stability
result for large-amplitude shock profiles of a quasilinear relaxation system\footnote{
Profiles of the semilinear Jin-Xin relaxation model \cite{JX} are stable for arbitrary
amplitude, by $L^1$-contraction/comparison \cite{MZ}.}
and the first for discontinuous shock profiles of a relaxation system of any kind.

\subsection{Discussion and open problems}\label{s:discussion}
A general approach to stability of traveling waves in systems of conservation and balance laws
is the ``divide and conquer'' algorithm described in, e.g., \cite{Z1,Z2,Z3}, wherein ``Lyapunov-type'' theorems relating
spectral to linearized and nonlinear stability are established in a very general setting, then spectral stability is
verified in a problem-specific way, whether by numerics, asymptotics, or special structure of the equations.
Useful topological criteria involving various ``stability indices'' are often explicitly computable
as necessary conditions, leading to analytical {\it instability} results for large-amplitude waves of quite general systems \cite{GZ,Z1,Z2}.
By comparison, complete {\it global stability} results as in, e.g., \cite{CGS,JX,Z4,MW,HLZ,LW}, are quite rare, 
exploiting special nonlinear structure of the system under consideration.

The special structure exploited here is that the eigenvalue system \eqref{syseval} may be reduced to a scalar second-order system of generalized
Sturm-Liouville type.
Specifically, following the general approach described in Section \ref{s:red}, the eigenvalue system \eqref{syseval} originating from any $2\times 2$
relaxation system may converted to a scalar second-order equation
\be\label{geneval}
Lw= (\lambda \alpha(x)w + \lambda^2 \beta(x)) w, \quad w\in \C,
\ee
where $L$ is a (fixed) second-order scalar operator with real-valued coefficients and $\alpha$ and $\beta$ are real.
For the specific case treated here, we find that, by a further Liouville transformation, we may arrange that {\it $Lw=w'' +q(x)w$ is self-adjoint and $\alpha$ and $\beta$ are strictly positive:}
the generalized Sturm-Liouville structure to which we refer above.
In the half-line case, there is in addition a $\lambda$-dependent Robin-type boundary condition
\be\label{robin}
w'(0)=(c + \phi(\lambda))w(0), \quad \phi(0)=0,
\ee
for which we find $\Im \phi(\lambda),  \Re \phi(\lambda)\leq 0$ for $\Re \lambda \geq 0$.
From this structure, together with monotonicity of the underlying traveling wave, 
we are able to deduce stability by 
a combination of standard Sturm-Liouville principles and ``by-hand'' computation.

This argument, while decisively answering the question of stability of hydraulic shocks, at the same time suggests a number
of other interesting questions.
For example, given the complexity of the formulae involved, to arrive at the end of computations 
to the above-described special structure appears little short of miraculous.
Is this a lucky accident?  Or is it somehow forced by the properties of the wave?
More generally, given a generalized eigenvalue problem \eqref{geneval} for which all eigenvalues are stable, is there 
some choice of coordinate system in which the resulting $\alpha$ and $\beta$ are strictly positive?
And, still more generally, what are the minimum structural requirements under which one can recover a full
or partial suite of standard Sturm-Liouville results?

Finally, we pose the question, open so far as we know, whether shock profiles of general 
$2\times 2$ relaxation systems of the type considered in \cite{L} are always stable, or whether one can
find examples of spectrally unstable smooth or discontinuous profiles for amplitudes sufficiently large.


\section{Profiles and reduction to second order scalar ODE}

\subsection{Profiles} Following \cite[\S 2]{YZ}, we find, substituting the ansatz \eqref{prof} into \eqref{sv}
and using the first (conservative) equation to eliminate $Q$, that profiles satisfy on smooth regimes the
first-order scalar traveling-wave ODE
\be
\label{profileODE}
H'=\frac{F^2 \left(H - H_L\right) \left(H - H_R\right) \left(H-H_3\right)}{(H-H_s)(H^2+HH_s+H_s^2)}
\ee
where
\be
\label{H3Hs}
H_3:=\frac{\nu^2}{\nu^2+2\nu+1}H_R,\quad H_s:=\left(\frac{F\nu^2}{\nu+1}\right)^{\frac{2}{3}}H_R,
\quad \nu:=\sqrt{\frac{H_L}{H_R}}>1,
\ee
with $Q$ determined (from the first equation) by
\be\label{Q}
Q-c H\equiv \const =:-q_0.
\ee
From $\nu>1$, we have $H_3<H_R<H_L$.
When $F<2$, we have also $H_s<H_L$. When also $H_s<H_R$, there exists a smooth profile connecting equilibria
$H_L$ and $H_R$; when $H_R<H_s<H_L$, there exists a fifth point
\be
\label{Hstar}
H_*:=\frac{-\nu-1+\sqrt{8F^2\nu^4+\nu^2+2\nu+1}}{2\left(\nu+1\right)}H_R
\ee
lying between $H_s$ and $H_L$, such that there is an entropy-admissible Lax shock of \eqref{sv} from $H_*$
to $H_R$, hence a discontinuous hydraulic shock profile connecting equilibrium $H_L$ smoothly to the nonequilibrium point
$H_*$, and then by a Lax shock from $H_*$ to equilibrium $H_R$.

In both cases, the rational-coefficient ODE \eqref{profileODE} may be solved explicitly for $H$ as a function of $x$.
However, by monotonicity of $H$ on smooth parts of the profile (as holds for any scalar ODE),
we may equally well change coordinates and take $H$ as independent variable in place of $x$, as in 
\cite{JNRYZ,YZ}. Thus, we do not need anywhere the precise form of $(H,Q)(x)$ in our analysis here.

By the rescaling introduced in \cite{YZ} [Obeservation 2.4], we can fix $H_L=1$ and $0<H_R<1$. From now on, we substitute $H_L=1$ and assume $0<H_R<1$ in our analysis.


\subsection{Reduction to second-order scalar form}\label{s:red}
By performing a change of unknowns, we may rewrite the system \eqref{syseval} as a second order scalar ODE. Consider first a general $2 \times  2$ system of ODE
\be
\underbrace{
\left[\begin{array}{rr}
p_{11}(x)&p_{12}(x)\\
p_{21}(x)&p_{22}(x)
\end{array}
\right]}_{P}
\left[\begin{array}{rr}v_1(x)\\
v_2(x)
\end{array}\right]'=
\underbrace{\left[\begin{array}{rr}
q_{11}(x)&q_{12}(x)\\
q_{21}(x)&q_{22}(x)
\end{array}
\right]}_{Q}
\underbrace{\left[\begin{array}{rr}v_1(x)\\
v_2(x)
\end{array}\right]
}_{v}
\ee
with $p_{12}(0)\neq 0$. Let $T_1(x)=\left[\begin{array}{rr}1&0\\-\frac{p_{22}(x)}{p_{12}(x)} &1\end{array}\right]$, $T_2(x)=\left[\begin{array}{rr}1&0\\-\frac{p_{11}(x)}{p_{12}(x)} &1\end{array}\right]$ and note that $T_1PT_2=\left[\begin{array}{rr}0&p_{12}\\-\frac{\det P}{p_{12}} &0\end{array}\right]$. Defining the change of unknowns $v=T_2u$, the above becomes
$P(T_2u'+T_2'u)=QT_2u$.  Left multiplying $T_1$ on both hand sides, we have $T_1PT_2u'=(T_1QT_2-T_1PT_2')u=:Mu$, that is,
\be\label{veq}
\left[\begin{array}{rr}0&p_{12}\\-\frac{\det P}{p_{12}} &0\end{array}\right]\left[\begin{array}{rr}u_1\\
u_2
\end{array}\right]^\prime=\left[\begin{array}{rr}m_{11}& m_{12}\\m_{21}&m_{22}\end{array}\right]\left[\begin{array}{rr}u_1\\
u_2
\end{array}\right]
\ee
where here $M=[m_{i,j}]_{i,j=1}^2$.
Assuming $m_{11}(0)\neq 0$, the first equation $p_{12}u_2'=m_{11}u_1+m_{12}u_2$
yields $u_1=\frac{p_{12}u_2'-m_{12}u_2}{m_{11}}$.
Substituting in the second equation, we obtain the second-order scalar ODE
\be
-\frac{\det P}{p_{12}}\Big(\frac{p_{12}u_2'-m_{12}u_2}{m_{11}}\Big)^\prime=m_{21}\frac{p_{12}u_2'-m_{12}u_2}{m_{11}}+m_{22}u_2.
\ee 
Specialized to system \eqref{syseval}, by setting 
\be\label{T2}
v=T_2u, \qquad T_2=\left[\begin{array}{cc} 1 & 0\\ c & 1\end{array}\right],\\
\ee
and following the reduction procedures, the eigenvalue system \eqref{syseval} reduces for $\lambda \neq 0$ to 
\be\label{u2eq} 
u_2''+(f_1\lambda+f_2)u_2'+(f_3\lambda^2+f_4\lambda)u_2=0,
\qquad
u_1= - \frac{u_2'}{\lambda},
\ee
where $f_i$, $i=1,\dots,4$ are explicitly computable functions. (We display various combinations below where they are helpful,
but in general these are lengthy and we do not give them here.)
In terms of the original coordinates,
\be\label{origui}
u_1=h, \qquad u_2= q-ch.
\ee

For later use, we note that, dividing by $(f_3\lambda^2+f_4\lambda)$, differentiating, substituting
$-\lambda u_1$ everywere for $u_2'$, and rearranging, we may write \eqref{u2eq} alternatively as 
\be\label{u1eq}
\Big( \frac{u_1'+(f_1\lambda+f_2)u_1}{ f_3\lambda+f_4}\Big)'= -\lambda u_1,
\qquad
u_2(x)= - \lambda \int_{-\infty}^x u_1(y)dy,
\ee
to obtain a formulation for which all eigenvalues agree with those of \eqref{syseval}, including the translational
eigenvalue at $\lambda=0$, corresponding to $(h,q)=(H',Q')$, or $(u_1, u_2)=(H', 0)$. 
The formulations \eqref{u1eq} and \eqref{u2eq} may be recognized respectively as analogous to the ``flux'' and 
``balanced flux'' formulations of \cite{PZ}, the latter of which has the advantage of removing the translational
eigenvalue at $\lambda=0$.
For all other eigenvalues on $\Re \lambda \geq 0$, the spectra of \eqref{syseval}, \eqref{u2eq}, and
\eqref{u1eq} agree.
In particular, setting $\lambda=0$ and recalling \eqref{origui}, we record that $\bar h:= H'$ satisfies
\be\label{hident}
\bar h'+ f_2 \bar h=0.
\ee

Introducing now the Liouville-type transformation 
$$
w(\lambda,x)=e^{\frac{1}{2}\int_0^{x}(f_1(y)\lambda+f_2(y))dy}u_2(\lambda,x),
$$
we find that $w$ satisfies
\be 
\label{weq}
w''+\left(\Big(f_3-\frac{1}{4}f_1^2\Big)\lambda^2+\Big(f_4-\frac{1}{2}f_1f_2-\frac{1}{2}f_1'\Big)\lambda-\frac{1}{4}f_2^2-\frac{1}{2}f_2'\right)w=0.
\ee 

{\bf Summary:} The eigenvalues of \eqref{syseval} and the generalized eigenvalue equation \eqref{weq}
agree for $\Re\lambda \geq 0$ and $\lambda\neq 0$, hence to establish weak spectral stability of 
hydraulic shock profiles, it is sufficient to show that {\it \eqref{weq} admits no eigenvalues on 
$\Re \lambda\geq 0$ other than $\lambda=0$.}
In fact we shall show that \eqref{weq} admits no eigenvalues on $\Re\lambda \geq 0$, that is, the translational
zero eigenvalue of the original problem has been removed by the coordinate transformation to variable $w$.

\section{Spectral stability of smooth hydraulic shock profiles}

In order for $w$ to decay exponentially at $\pm\infty$, it is required that 
\ba 
\label{rate}
\Re\gamma_{1,-}(\lambda)+\lim_{y\rightarrow -\infty}\frac{1}{2}\left(f_1(y)\Re \lambda+f_2(y)\right)>0,\quad \Re\gamma_{2,+}(\lambda)+\lim_{y\rightarrow +\infty}\frac{1}{2}\left(f_1(y)\Re\lambda+f_2(y)\right)<0.
\ea 
where $\gamma_{2,+}$, $\gamma_{1,-}$ defined in \cite{YZ}[(4.8) (4.9)] are the expected decaying rate of eigenmode $v(\lambda,x)$ as $x\rightarrow \pm\infty$.
Calculation shows 
\ba
\label{ratem}
&\gamma_{1,-}(\lambda)+\lim_{y\rightarrow -\infty}\frac{1}{2}\left(f_1(y)\lambda+f_2(y)\right)\\
=&\frac{Fv\left(v+1\right)\sqrt{4\lambda ^2v^2{\left(v+1\right)}^2+4\lambda v\left(v+1\right)\left(-F^2+2v^2+2v\right)+F^2{\left(v^2+v-2\right)}^2}}{2\left(-F^2+v^4+2v^3+v^2\right)}
\ea
\ba
\label{ratep}
&\gamma_{2,+}(\lambda)+\lim_{y\rightarrow +\infty}\frac{1}{2}\left(f_1(y)\lambda+f_2(y)\right)\\
=&-\frac{Fv\left(v+1\right)\sqrt{4\lambda ^2{\left(v+1\right)}^2+4\lambda v\left(v+1\right)\left(-F^2v^2+2v+2\right)+F^2v^2{\left(-2v^2+v+1\right)}^2}}{2\left(-F^2v^4+v^2+2v+1\right)}
\ea
\eqref{rate} then holds.

\begin{lemma}\label{noimaginaryeigen}
The system \eqref{syseval} has no nonzero pure imaginary eigenvalue.
\end{lemma}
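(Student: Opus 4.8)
The plan is to rule out imaginary-axis eigenvalues by a weighted energy identity applied to the reduced scalar equation \eqref{weq}, exploiting the strict positivity of the coefficient of $\lambda$. First I would record that, by the reduction of Section \ref{s:red} together with the decay condition \eqref{rate}, any nonzero eigenvalue $\lambda$ of \eqref{syseval} on $\Re\lambda\ge 0$ corresponds to a nontrivial, exponentially decaying solution $w\in L^2(\R)$ of
\begin{equation*}
w'' + \big(A\lambda^2 + B\lambda + C\big)w = 0,
\end{equation*}
with real-valued coefficients $A = f_3-\tfrac14 f_1^2$, $\ B = f_4-\tfrac12 f_1 f_2 - \tfrac12 f_1'$, $\ C = -\tfrac14 f_2^2 - \tfrac12 f_2'$. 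Suppose, toward a contradiction, that $\lambda = i\xi$ with $\xi\in\R\setminus\{0\}$ is such an eigenvalue, with eigenfunction $w$.

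Next I would multiply the equation by $\overline{w}$, integrate over $\R$, and integrate by parts once. Since $w$ and $w'$ decay exponentially at $\pm\infty$ (by \eqref{rate}), the boundary terms vanish and we obtain
\begin{equation*}
\int_\R |w'|^2\,dx = \int_\R \big(A\lambda^2 + B\lambda + C\big)|w|^2\,dx.
\end{equation*}
Substituting $\lambda = i\xi$, the coefficient becomes $(-A\xi^2 + C) + i\,\xi B$, with $A,B,C$ real. Since the left-hand side is real, taking the imaginary part of the identity yields $\xi\int_\R B\,|w|^2\,dx = 0$, and hence $\int_\R B\,|w|^2\,dx = 0$ because $\xi\ne 0$.

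Finally, I would invoke the structural positivity $\alpha = -B>0$ recorded in Section \ref{s:red}, valid across the full parameter range $0<F<2$, $0<H_R<1$: this forces $\int_\R B\,|w|^2\,dx < 0$ for any $w\not\equiv 0$, contradicting the previous step. Therefore no such eigenfunction exists, and \eqref{syseval} has no nonzero pure imaginary eigenvalue. The only genuine obstacle is the input $\alpha>0$: given the lengthy explicit expressions for $f_1,\dots,f_4$, verifying the strict sign of $B=f_4-\tfrac12 f_1 f_2 - \tfrac12 f_1'$ uniformly in $(F,H_R)$ is where the real work lies. The imaginary-axis case is isolated in this lemma precisely because it needs only this single sign condition, whereas excluding the remaining spectrum on $\Re\lambda\ge0$ will additionally call on the positivity of $\beta=-A$ and further Sturm–Liouville structure.
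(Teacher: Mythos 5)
Your outline reproduces the paper's strategy exactly: pass to the scalar equation \eqref{weq}, take the $L^2$ inner product with $w$, use exponential decay (via \eqref{rate}) to kill boundary terms, and extract from the imaginary part the identity $\int_\R B\,|w|^2\,dx=0$, so that everything hinges on the coefficient $B=f_4-\tfrac12 f_1f_2-\tfrac12 f_1'$ having a strict sign. Up to that point the argument is correct and is precisely the paper's.

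However, there is a genuine gap at the decisive step: you ``invoke the structural positivity $\alpha=-B>0$ recorded in Section \ref{s:red},'' but no such fact is recorded there --- Section \ref{s:red} only performs the reduction and explicitly declines to display the $f_i$. The statement in the introduction that \eqref{weq} has the form \eqref{geneval} with $\alpha,\beta>0$ is a summary of what the paper \emph{proves}, and the place where it is proved for $\alpha$ is exactly the proof of this lemma; citing it here is circular. The paper's actual work is the verification you defer: it computes
\[
-f_4+\tfrac12 f_1f_2+\tfrac12 f_1'
=\frac{F^2\bigl(H-H_R+H(\sqrt{H_R}+H_R)\bigr)\,f_{F,H_R}(H)}{(\sqrt{H_R}+1)^3\,(H^3-H_s^3)^2},
\qquad
f_{F,H_R}(H)=2(\sqrt{H_R}+1)^2H^3-F^2H_R(H_R+\sqrt{H_R}+1)H+F^2H_R^2,
\]
and then shows $f_{F,H_R}>0$ on the range of the profile by locating its positive critical point $H_c$ (see \eqref{Hcdef}), proving $H_c<F\sqrt{H_R}/(\sqrt{H_R}+1)<H_R$ using the smooth-existence condition $H_R+\sqrt{H_R}>F$ of \eqref{domain}, and then using monotonicity of $f_{F,H_R}$ on $[H_c,\infty)$ together with $f_{F,H_R}(H_R)=H_R^{5/2}(\sqrt{H_R}+1)(2H_R-F^2+2\sqrt{H_R})>0$, again via \eqref{domain}. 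Note also that your claim that positivity holds ``across the full parameter range $0<F<2$, $0<H_R<1$'' is too strong as stated: the argument just sketched uses the smooth-profile condition \eqref{domain}, and in the discontinuous regime (where \eqref{domain} fails) the paper must re-prove positivity on the relevant range $H\ge H_*$ via the separate inequalities $H_*>H_c$ and $H_*>F\sqrt{H_R}/\sqrt{2(\sqrt{H_R}+1)}$ of Appendix \ref{ineq}. So the sign of $B$ is a profile- and regime-dependent fact requiring the explicit computation, not a structural feature one may quote; without it your proof is an (accurate) skeleton of the paper's argument with its core removed.
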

\begin{proof}
By coordinate change $v\leftrightarrow u\leftrightarrow w$ and exponential decay of $w(x)$ as $x\rightarrow \pm\infty$ ensured by \eqref{rate}, existence of eigenmodes of \eqref{syseval} is equivalent to existence of exponential decaying (as $x\rightarrow\pm\infty$) solutions $w$ to \eqref{weq}. Let now $\lambda= ia, a\neq 0$ be an eigenvalue and $w$ a
corresponding decaying solution.
Substituting $\lambda=ia$, $w$ in \eqref{weq} implies
\be
\label{iaw}
w''+\Big(-a^2f_3+\frac{1}{4}a^2f_1^2-\frac{1}{4}f_2^2-\frac{1}{2}f_2'\Big)w=ia\Big(-f_4+\frac{1}{2}f_1f_2+\frac{1}{2}f_1'\Big) w.
\ee 
Taking the $L^2$ inner product of $w$ with \eqref{iaw} on the whole line yields
\be
\label{inner}
-\langle w',w'\rangle+\Big\langle w,\Big(-a^2f_3+\frac{1}{4}a^2f_1^2-\frac{1}{4}f_2^2-\frac{1}{2}f_2'\Big)w\Big\rangle=ia\Big\langle w,\Big(-f_4+\frac{1}{2}f_1f_2+\frac{1}{2}f_1'\Big) w\Big\rangle.
\ee 
Taking the imaginary part of \eqref{inner} then gives 
\be 
\Big\langle w,\Big(-f_4+\frac{1}{2}f_1f_2+\frac{1}{2}f_1'\Big) w\Big\rangle=0.
\ee
We will reach a contradiction provided that $\left(-f_4+\frac{1}{2}f_1f_2+\frac{1}{2}f_1'\right)$ has definite sign. But
\be
\label{defico}
-f_4+\frac{1}{2}f_1f_2+\frac{1}{2}f_1'=\frac{F^2\left(H-H_R+H(\sqrt{H_R}+H_R)\right)f_{F,H_R}(H)}{\left(\sqrt{H_R}+1\right)^3{\left(H^3-H_s^3\right)}^2},
\ee
\be 
\label{fFHR}
f_{F,H_R}(H):=2{\left(\sqrt{H_R}+1\right)}^2H^3-F^2H_R\left(H_R+\sqrt{H_R}+1\right)H+F^2{H_R}^2.
\ee 
It then suffices to show $f_{F,H_R}(H)$ has definite sign.
The positive critical point of $f_{F,H_R}(\cdot)$ is 
\be 
\label{Hcdef}
H_c(F,H_R)=\frac{F\sqrt{H_R(H_R+\sqrt{H_R}+1)}}{\sqrt{6}(\sqrt{H_R}+1)}.
\ee
Further, we have
\be 
H_c(F,H_R)<F\frac{\sqrt{H_R}}{\sqrt{H_R}+1}<H_R,
\ee 
in which the last inequality holds because the domain of existence of smooth hydraulic shocks is
\be\label{domain}
H_R+\sqrt{H_R}>F.
\ee
By monotonicity of $f_{F,H_R}(\cdot)$ \eqref{fFHR} on $[H_c,\infty)$, we thus have 
\ba
f_{F,H_R}(H)&>f_{F,H_R}(H_R)\\
&={H_R}^{5/2}\left(\sqrt{H_R}+1\right)\left(2H_R-F^2+2\sqrt{H_R}\right)\\
&>{H_R}^{5/2}\left(\sqrt{H_R}+1\right)\left(FH_R-F^2+F\sqrt{H_R}\right)\\
&={H_R}^{5/2}\left(\sqrt{H_R}+1\right)F\left(H_R-F+\sqrt{H_R}\right)> 0,
\ea
in which the last inequality holds, again, by \eqref{domain}.
\end{proof}

\begin{corollary}\label{smoothstab}

All smooth hydraulic shock profiles are weakly spectrally stable in the sense that system \eqref{syseval} has no eigenvalue $\lambda$ with $\Re \lambda \geq 0$ and $\lambda\neq0$.
\end{corollary}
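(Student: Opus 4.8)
The plan is to exploit the self-adjoint generalized Sturm--Liouville structure of \eqref{weq}. Writing $L := \partial_x^2 + q$ with $q := -\tfrac14 f_2^2 - \tfrac12 f_2'$, equation \eqref{weq} takes the form $Lw = \lambda\alpha w + \lambda^2\beta w$ of \eqref{geneval}, with $\alpha := -f_4 + \tfrac12 f_1 f_2 + \tfrac12 f_1'$ and $\beta := \tfrac14 f_1^2 - f_3$. Here $L$ is formally self-adjoint with real coefficients, $\alpha>0$ along the profile by the computation \eqref{defico}--\eqref{fFHR}, and (as I will verify) $\beta>0$ as well. By the Summary of Section \ref{s:red} it suffices to show that \eqref{weq} has no eigenvalue with $\Re\lambda\ge 0$, $\lambda\ne 0$; any such eigenfunction $w$ decays exponentially at both $\pm\infty$ by \eqref{rate}, hence lies in $H^1(\R)$.

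First I would record the basic energy identity. Pairing \eqref{weq} with $w$ in $L^2(\R)$ and integrating by parts (boundary terms vanishing by exponential decay) gives
\begin{equation}\label{plan-energy}
-\langle w', w'\rangle + \langle qw, w\rangle = a\lambda + b\lambda^2, \qquad a := \langle \alpha w, w\rangle > 0, \quad b := \langle \beta w, w\rangle > 0,
\end{equation}
where the left-hand side equals $\langle Lw, w\rangle$ and is real since $q,\alpha,\beta$ are real. Writing $\lambda=\mu+i\tau$ and taking imaginary parts in \eqref{plan-energy} yields $\tau(a+2b\mu)=0$. Thus any non-real eigenvalue ($\tau\ne 0$) must satisfy $\mu=-a/(2b)<0$, so no eigenvalue with $\Re\lambda\ge 0$ can be non-real (consistent with, and reproving, Lemma \ref{noimaginaryeigen}).

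It remains to exclude real eigenvalues $\lambda=\mu>0$. For these \eqref{plan-energy} gives $\langle Lw, w\rangle = a\mu + b\mu^2 > 0$. To contradict this I would show $L\le 0$ as a quadratic form on $H^1$. The key observation is that $\phi(x) := \exp\!\big(\tfrac12\int_0^x f_2(y)\,dy\big)$ is a strictly positive global solution of $Lw=0$, as checked directly from $q=-\tfrac14 f_2^2-\tfrac12 f_2'$; note $\phi^{-2}$ is a constant multiple of $\bar h=H'$ by \eqref{hident}, so its nonvanishing reflects monotonicity of the underlying profile. Substituting $w=\phi\rho$ and integrating by parts yields the ground-state identity $\langle w', w'\rangle - \langle qw, w\rangle = \langle \phi^2\rho', \rho'\rangle \ge 0$, i.e. $\langle Lw, w\rangle = -\langle\phi^2\rho',\rho'\rangle \le 0$, the boundary terms again vanishing since $w\to 0$ while $\phi'/\phi=\tfrac12 f_2$ stays bounded. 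This contradicts $\langle Lw, w\rangle>0$, ruling out positive real eigenvalues. Combining the two cases with $\lambda\ne 0$ shows \eqref{weq}, hence \eqref{syseval}, has no eigenvalue with $\Re\lambda\ge 0$, $\lambda\ne 0$.

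The main obstacle is the algebraic step asserting $\beta=\tfrac14 f_1^2-f_3>0$ along the profile: like the positivity of $\alpha$ in \eqref{defico}, this requires inserting the explicit profile data \eqref{profileODE}--\eqref{H3Hs} and reducing, via the existence constraint \eqref{domain}, to the definite sign of an explicit rational/polynomial expression in $(H,F,H_R)$. (The positivity of $\beta$ is moreover visible in the limiting decay rates \eqref{ratem}--\eqref{ratep}, whose leading $\lambda^2$ coefficients under the radicals must be positive for \eqref{rate} to hold.) Everything else --- the energy identity, the sign of the imaginary part, and the ground-state substitution --- is routine once $\alpha,\beta>0$ are in hand.
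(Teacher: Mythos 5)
Your argument is correct, and its skeleton is exactly that of the paper's \emph{alternate} proof of Corollary \ref{smoothstab} in Section \ref{s:alternate}: cast \eqref{weq} in the form \eqref{geneval} with $\alpha,\beta>0$, pair with $w$ in $L^2$, use the imaginary part to force any eigenvalue in $\Re\lambda\geq 0$ to be real, and then contradict positivity of $b\langle w,\alpha w\rangle+b^2\langle w,\beta w\rangle$ by negativity of the $\lambda=0$ form $\mathcal{B}$ of \eqref{B}. (Note that the paper's \emph{primary} proof is entirely different: a homotopy/Evans-function argument combining Lemma \ref{noimaginaryeigen} with fixed multiplicity of the zero eigenvalue and small-amplitude spectral stability from \cite{MZ3}.) Where you genuinely depart from the paper is in how negativity of $\mathcal{B}$ is established. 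The paper's Lemma \ref{nonneg} routes through the differentiated operator $\mathcal{M}z=(z'+f_2z)'$, essential-spectrum localization via \cite{He}, the sign-definite zero-eigenfunction $H'$, and the real-line principal eigenvalue theorem of \cite{BCJLMS,HLS}. Your ground-state substitution replaces all of this machinery with a sum-of-squares identity: since $\phi=e^{\frac12\int_0^xf_2(y)dy}$ solves $L\phi=0$ exactly, writing $w=\phi\rho$ gives $\mathcal{B}(w,w)=-\|\phi\rho'\|_{L^2}^2=-\|w'-\tfrac12 f_2 w\|_{L^2}^2\leq 0$, with equality only for $w=C\phi$, which grows exponentially at both ends (as $\phi^{-2}$ is proportional to $H'\to 0$), forcing $C=0$. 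This is more elementary---it uses no spectral theory, and in fact not even monotonicity of $H$, since positivity of $\phi$ is automatic for an exponential---and it proves analytically what Remark \ref{posrmk} confirms only numerically. What the paper's heavier route buys is the stronger operator-level statement that $L$, $M$, $\mathcal{M}$ have no spectrum in $\Re\lambda\geq 0$, and a set of techniques (homotopy in the domain endpoint, boundary-condition bookkeeping) that carry over to the half-line problem of Section 4, where your whole-line ground-state identity would have to be modified to handle the Robin boundary term.

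Two small points. First, the one step you defer---$\beta=\tfrac14 f_1^2-f_3>0$---is true and is precisely the paper's identity \eqref{sign1}: $-2f_3+\tfrac12 f_1^2=2F^2H^5/(H^3-H_s^3)^2$, manifestly positive along smooth profiles since there $H>H_R>H_s$; no appeal to the existence constraint \eqref{domain} is needed for this coefficient (unlike for $\alpha$, where \eqref{domain} enters via \eqref{defico}--\eqref{fFHR}). Second, in discarding boundary terms in the ground-state identity, note that what must vanish is $\phi^2\rho'\bar\rho=w'\bar w-\tfrac12 f_2|w|^2$, so you need exponential decay of $w'$ as well as of $w$; this is standard for eigenfunctions of asymptotically constant-coefficient ODE systems and is implicit in \eqref{rate}, but it is worth stating.
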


\begin{proof}
By their characterization as roots of the Evans function, which is analytic on $\Re\lambda \geq -\eta$ for some
$\eta>0$, and real analytic in parameters $F$, $H_L$, $H_R$ \cite{MZ,MZ2,YZ}, we
see readily that eigenvalues associated with \eqref{syseval} perturb continuously as parameters are varied, in both
location and multiplicity. In particular, the fact shown in Lemma \ref{noimaginaryeigen} that there are no 
nonzero imaginary eigenvalues together with the fact shown in \cite{YZ} that there is an eigenvalue of 
fixed multiplicity one at $\lambda=0$, 
implies that no eigenvalues can cross from  $\Re \lambda<0$ to $\Re \lambda \geq 0$ as parameters are varied.
By connectedness of the parameter range on which hydraulic shock profiles exist, therefore, we find by a homotopy
argument that the number of nonstable eigenvalues, $\Re \lambda \geq 0$ is constant across the entire domain of
existence.
But, by \cite{MZ3}, small-amplitude hydraulic shock profiles are spectrally stable, hence have precisely one 
nonstable eigenvalue consisting of a simple root of the Evans function at $\lambda=0$.
Thus, the number of nonstable roots for all hydraulic shock profiles must be $1$, and this is accounted for by
the multiplicity one root at the origin corresponding to translational invariance of the underlying equations
\cite{MZ,MZ2}.
It follows that there are no nonstable eigenvalues other than $\lambda=0$, and all profiles are weakly spectrally stable
as claimed.
\end{proof}

\subsection{Alternate proof}\label{s:alternate}
We give also an alternate, direct proof of stability, both for its own interest and as practice for nonsmooth case.
Denote by
\be\label{L}
Lw:= w''+\Big(-\frac{1}{4}f_2^2-\frac{1}{2}f_2'\Big)w
\ee
the self-adjoint operator given by the $\lambda=0$ part of the lefthand side of \eqref{weq},
and denote by
\be\label{B}
\mathcal{B}(\tilde{w},w):=-\langle \tilde{w}',w'\rangle - \Big\langle \tilde{w}, \Big(\frac{1}{4}f_2^2+\frac{1}{2}f_2'\Big)w\Big\rangle
\ee
the bilinear form induced on $\tilde{w},\;w\in H^1(\R)$ by $\mathcal{B}(\tilde{w},w):=\langle \tilde{w}, Lw\rangle$.

\bl\label{nonneg}
Operator $L$ has no eigenvalues on $\Re \lambda \geq 0$; form $\mathcal{B}$ is negative definite.
\el

\begin{proof}
On $\Re\lambda \geq 0$, the eigenvalues of $L$ agree with those of $Mu_2:= u_2'' + f_2 u_2'$, 
through the Liouville transform $ w(x) =e^{\frac{1}{2}\int_0^{x}f_2(y)dy}u_2(x)$.
Here, we are using the fact that the essential spectra of both operators lies in 
$\{\Re \lambda <0\}\cup \{0\}$ to see that eigenfunctions on $\{\Re \lambda \geq 0\}\setminus \{0\}$
are composed of exponentially decaying modes, which, further, are in one-to-one correspondence 
in the two coordinate systems. 
This follows, in turn, from a standard theorem of Henry \cite{He} showing that the rightmost boundary of the
set of essential spectra on asymptotically constant-coefficient ordinary elliptic differential operator is
given by the rightmost boundary of the spectra of its constant-coefficient limits,
and the characterization of this boundary as the rightmost dispersion curve of the Fourier symbol of these
limits \cite{GZ,Z1}, and rightmost boundary of the set (the ``domain of consistent splitting'')
for which solutions of the eigenvalue equations either decay or grow exponentially. 
For similar arguments, see, e.g., \cite{Sa}.
Indeed, the essential spectrum of $L$ is confined to $\Re \lambda \leq -\eta<0$, since the limiting
constant-coefficient operators $L_\pm w= w''-(f_2^2(\pm \infty)/4)w$ are evidently negative definite,
$f_2$ being nonvanishing at $\pm \infty$.\footnote{This can be seen by direct computation or deduced
indirectly by the fact that the linearized traveling-wave ODE  $h'+f_2h=0$ (see discussion 
surrounding \eqref{u1eq}) admits the exponentially-decaying solution $h=H'$ at $\pm \infty$.}
At $\lambda=0$, there is a neutral, nondecaying
and nongrowing mode in the $u_2$ coordinates, but the exponentially decaying mode is still unique and in 
correspondence with that in the $w$-coordinates, hence eigenfunctions are in one-to-one correspondence
also for $\lambda=0$.

Now, introduce the ``differentiated operator'' $\mathcal{M}z= (z'+ f_2z)'$ induced by $z=u_2'$.
By divergence form of $\mathcal{M}$ we find, integrating both sides of $\mathcal{M}z=\lambda z$, that any eigenfunction
for $\Re \lambda \geq 0$ and $\lambda \neq 0$ (necessarily exponentially decaying)
has zero integral $\int_{-\infty}^{\infty}z(y)dy=0$,
hence $u_2(x):=\int_{-\infty}^x z(y)dy$ is exponentially decaying and an eigenfunction of $M$; thus, the eigenvalues
of $M$ and $\mathcal{M}$ agree on $\Re \lambda\geq 0$, $\lambda \neq 0$.

By \eqref{hident}, we have that $\bar z:=H'$ is an eigenfunction of $\mathcal{M}$ with eigenvalue $\lambda=0$.
By monotonicity $H'<0$ of the traveling wave profile, we have on the other hand that $\bar z<0$ has one sign.
Moreover, by the same computation as for $M$, the essential spectrum of $\mathcal{M}$ is 
confined to $\{\lambda:  \Re \lambda <0\} \cup \{0\}$. 
By standard Sturm-Liouville considerations, therefore- specifically, the extension to the real
line of the principal eigenvalue theorem \cite{BCJLMS,HLS}- we may conclude that $\lambda=0$ is the maximum
eigenvalue of $\mathcal{M}$. It follows that $M$ and thus $L$ have no eigenvalues on $\Re \lambda \geq 0$ other
than possibly at $\lambda=0$. Directly solving $0=Mu_2=u_2'' + f_2 u_2'$ as $u_2'=e^{-\int_0^x f_2(y)dy}u_2'(0)$,
we find that $\sgn u_2'=\sgn u'(0)$ and so $Mu_2=0$ has no nontrivial decaying solutions, and so
$\lambda=0$ is not an eigenvalue of $M$ or equivalently of $L$.
Thus, $L$ has no eigenvalues on $\Re \lambda \geq 0$, and, as remarked earlier, has essential spectrum
confined to $\Re \lambda \leq -\eta<0$.
It follows that $\mathcal{B}$ is negative definite as claimed.
\end{proof}

\br\label{posrmk}
Numerically, we find that $\frac12 f_2^2+f_2'>0$, whence $\mathcal{B}$ is negative definite by inspection.
\er

\begin{proof}[Alternate proof of Corollary \ref{smoothstab}]
	From the calculations above, \eqref{weq} is of form \eqref{geneval} with $\alpha,  \beta>0$.
Let $\lambda=ia+b$ with real $a$, $b$ and $b\ge 0$. Then, taking the imaginary part of the $L^2$ inner product
of $w$ with $Lw= \alpha \lambda w + \beta \lambda^2 w$, we have
\be
0= a\langle w, \alpha w\rangle +2ab\langle w, \beta w\rangle.
\ee
Noting that $\langle w, \alpha w\rangle +2b\langle w, \beta w\rangle>0$ for $w\not \equiv 0$,
we find therefore that $a=0$; that is, we reduce to the study of real eigenvalues $\lambda=b>0$.
Taking the real part of the $L^2$ inner product of $w$ with $Lw= \alpha b w + \beta b^2 w$, we
thus obtain 
\be
\label{smoothB}
\mathcal{B}(w,w)= b\langle w, \alpha w\rangle+ b^2\langle w, \beta w\rangle \geq 0,
\ee
with equality only if $w\equiv 0$. By negative definiteness of $\mathcal{B}$, equation \eqref{smoothB} never holds for non-zero $w$.
\end{proof}

\section{Spectral stability of discontinuous hydraulic shock profiles}
For the discontinuous case, the eigenvalue system in ``good unknowns'', after elimination of the front location, reads \cite{YZ}
\ba
\label{eigen-eq}
&(Av)_x=(E-\lambda \Id) v, \\ 
&[\lambda\overline{W}-R(\overline{W})]_\perp \cdot [Av]=0.
\ea
Since the eigenvalues $\gamma_{1,2,\pm}$ of limiting matrix $A^{-1}_\pm(E_\pm-\lambda \Id)$ satisfy
\ba 
\label{signgamma}
&\Re\gamma_{1,-}(\lambda)>0,\;\Re\gamma_{2,-}(\lambda)<0,\quad\text{for all $\Re\lambda>0,\;F<2,\;\nu>1$,}\\
&\Re\gamma_{1,+}(\lambda)>0,\;\Re\gamma_{2,+}(\lambda)>0,\quad\text{for all $\Re\lambda>0,\;\nu>\frac{1+\sqrt{1+4F}}{2F}$},
\ea
we have that $v(\lambda,x)\equiv 0$ for $x>0$, yielding $w(\lambda,x)\equiv 0$ for $x>0$. Thus, the system reduces
as described in the introduction to \eqref{syseval} on $x\in (-\infty,0)$, with boundary condition \eqref{bc} at $x=0$. 
Applying the same reduction/Liouville-type transformation as in the smooth case, we obtain the scalar second-order problem
\eqref{geneval}, with boundary condition \eqref{robin} induced by \eqref{bc}, to be computed later.

In order for $w$ to decay exponentially at $-\infty$, it is required that 
\ba 
\Re\gamma_{1,-}(\lambda)+\lim_{y\rightarrow -\infty}\frac{1}{2}\left(f_1(y)\Re \lambda+f_2(y)\right)>0.
\ea 
which follows from taking the real part of equation \eqref{ratem}.
Taking the $L^2$ inner product of $w$ with \eqref{weq} on the half line $x< 0$ yields
\be 
\label{L2equation}
\bar{w}(0)\cdot w'(0)-\langle w',w'\rangle +\Big\langle w,\Big(f_3\lambda^2+f_4\lambda-\frac{1}{4}(f_1\lambda+f_2)^2-\frac{1}{2}(f_1'\lambda+f_2')\Big)w\Big\rangle=0.
\ee 
The relation between $v(0^-)$ and $w(0)$ coordinates (for simplicity omitting ``$(0^-)$") is
\be
\label{vwrelation}
v=T_2\left[\begin{array}{c}u_1\\ u_2 \end{array}\right]=T_2\left[\begin{array}{rr}-\frac{1}{\lambda}&0\\0&1\end{array}\right]\left[\begin{array}{c}u_2'\\ u_2 \end{array}\right]=T_2\left[\begin{array}{rr}-\frac{1}{\lambda}&0\\0&1\end{array}\right]\left[\begin{array}{rr}1&-\frac{1}{2}(f_1\lambda+f_2)\\0&1\end{array}\right]\left[\begin{array}{c}w'\\ w \end{array}\right],
\ee
where $T_2$ is as in \eqref{T2}.
For a vector $\left[\begin{array}{rr}a&b\end{array}\right]^T$, define its $\perp$-vector as 
$\left[\begin{array}{c}a\\b\end{array}\right]_\perp=\left[\begin{array}{rr}b&-a\end{array}\right].$
	Substituting $[\lambda\overline{W}-R(\overline{W})]_\perp$ in \eqref{bc} and using \eqref{vwrelation} yields the equation for $w'$, $w$:
\be 
[\lambda\overline{W}-R(\overline{W})]_\perp A(0^-)T_2\left[\begin{array}{rr}-\frac{1}{\lambda}&0\\0&1\end{array}\right]\left[\begin{array}{rr}1&-\frac{1}{2}(f_1\lambda+f_2)\\0&1\end{array}\right]\left[\begin{array}{c}w'\\ w \end{array}\right]=0,
\ee 
or
\be\label{cbc}
w'(0)=(c_1+c_2\lambda)w(0),
\ee
where, after simplifying by identity
\be
\label{identity}
{H_*}^2{\left(\sqrt{H_R}+1\right)}^2+H_*H_R{\left(\sqrt{H_R}+1\right)}^2-2F^2H_R=0,
\ee
\be
\label{c1}
c_1=\frac{1}{2}f_2(H_*)-F^2\frac{H_s{\left(H_R+\sqrt{H_R}+1\right)}^2-H_R\left(2F^2+1\right)}{(\sqrt{H_R}+1)^2(H_*^3-H_s^3)},
\ee
\be 
\label{c2}
-c_2=\frac{F^2H_RH_*}{(\sqrt{H_R}+1)(H_*^3-H_s^3)}>0.
\ee

We see that \eqref{cbc} is
of form \eqref{bc}, with
$c=c_1$ and $\phi(\lambda)=c_2\lambda$.
 Substituting \eqref{cbc} in \eqref{L2equation} yields 
\be
\label{L2equations}
(c_1+c_2\lambda){w}(0)\cdot w(0)-\langle w',w'\rangle+\Big\langle w,\Big(f_3\lambda^2+f_4\lambda-\frac{1}{4}(f_1\lambda+f_2)^2-\frac{1}{2}(f_1'\lambda+f_2')\Big)w\Big\rangle=0.
\ee

\subsection{Nonexistence of complex eigenvalues}
Substituting $\lambda=ia+b$ with real $a$, $b$ in \eqref{L2equations} yields an imaginary part 
\be 
a\left(c_2{w}(0)\cdot w(0)+\Big\langle w,\Big((2f_3-\frac{1}{2}f_1^2)b+f_4-\frac{1}{2}f_1f_2-\frac{1}{2}f_1'\Big)w\Big\rangle\right)=0.
\ee
Provided $a\neq 0$ ($\lambda$ being non-real), this further simplifies to 
\be
\label{aneq0}
-c_2{w}(0)\cdot w(0)+\Big\langle w,\Big(-2f_3+\frac{1}{2}f_1^2)b-f_4+\frac{1}{2}f_1f_2+\frac{1}{2}f_1'\Big)w\Big\rangle=0,
\ee
where $-c_2$ is as in \eqref{c2}, $-f_4+\frac{1}{2}f_1f_2+\frac{1}{2}f_1'$ is as in \eqref{defico}, and 
\be
\label{sign1}
-2f_3+\frac{1}{2}f_1^2=\frac{2F^2H^5}{\left(H^3-H_s^3\right)^2}.
\ee
Substituting $\lambda=b$ with real $b$ in  \eqref{L2equations} yields
\ba  
\label{L2equations2}
&(c_1+c_2b){w}(0)\cdot w(0)-\langle w',w'\rangle \\
+&\Big\langle w,\Big((f_3-\frac{1}{4}f_1^2)b^2+(f_4-\frac{1}{2}f_1f_2-\frac{1}{2}f_1')b-\frac{1}{4}f_2^2-\frac{1}{2}f_2'\Big)w\Big\rangle=0.
\ea  
\begin{lemma}\label{halfcomplex}
On the right half space $\Re\lambda\ge 0$,
the system \eqref{eigen-eq} has no non-real eigenvalues for discontinuous hydraulic shock profiles.
\end{lemma}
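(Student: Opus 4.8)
The plan is to argue by contradiction from the imaginary part of the half-line energy identity \eqref{L2equations}, in the same spirit as the whole-line Lemma \ref{noimaginaryeigen}, but now tracking the boundary contribution produced by the Robin condition \eqref{cbc}. Writing $\lambda=ia+b$ with $a,b$ real and $b\ge 0$, and supposing $\lambda$ non-real (so $a\neq 0$), the imaginary part of \eqref{L2equations} reduces after division by $a$ to exactly \eqref{aneq0}. I would read \eqref{aneq0} as the assertion that a sum of three quadratic-form contributions in the (exponentially decaying) eigenfunction $w$ vanishes, and then show that all three have the same strict sign for $w\not\equiv 0$, which is impossible; this forces $a=0$, i.e. no non-real eigenvalue on $\Re\lambda\ge 0$.

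The three pieces are the boundary term $-c_2\,\bar w(0)w(0)=-c_2|w(0)|^2$, which is $\ge 0$ since $-c_2>0$ by \eqref{c2}; the bulk term $b\,\langle w,(-2f_3+\tfrac12 f_1^2)w\rangle$, which is $\ge 0$ since $b\ge 0$ and $-2f_3+\tfrac12 f_1^2>0$ by \eqref{sign1}; and $\langle w,(-f_4+\tfrac12 f_1f_2+\tfrac12 f_1')w\rangle$, whose integrand is displayed in \eqref{defico}. The first two signs are immediate, so the real content is to show the quantity in \eqref{defico} is strictly positive along the smooth part of the profile. This is the main obstacle: on a discontinuous profile the monotone variable $H$ sweeps only $[H_*,1]$ between $x=-\infty$ and the subshock at $x=0$, and the positivity argument of Lemma \ref{noimaginaryeigen} relied on the smooth-regime inequality $H_R+\sqrt{H_R}>F$, which fails precisely in the discontinuous regime. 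So I must re-prove positivity, but only on the smaller range $H\in[H_*,1]$.

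Since the prefactors of \eqref{defico} are manifestly positive for $H\ge H_*>H_s>H_R$, positivity of \eqref{defico} reduces to $f_{F,H_R}(H)>0$ from \eqref{fFHR}. I would then invoke the monotonicity of $f_{F,H_R}$ on $[H_c,\infty)$ already established in Lemma \ref{noimaginaryeigen}, together with the bound $H_c<H_*$, to reduce the whole interval estimate to the single endpoint inequality $f_{F,H_R}(H_*)>0$. The bound $H_c<H_*$ I would obtain from the estimate $H_c<F\sqrt{H_R}/(\sqrt{H_R}+1)$ of Lemma \ref{noimaginaryeigen} combined with the subshock identity \eqref{identity}: solving \eqref{identity} for $F^2$ gives $F\sqrt{H_R}/(\sqrt{H_R}+1)=\sqrt{H_*(H_*+H_R)/2}$, which is $<H_*$ because $H_R<H_*$.

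Finally, to establish $f_{F,H_R}(H_*)>0$ I would substitute \eqref{identity} to eliminate $F^2$ from \eqref{fFHR} evaluated at $H=H_*$. Writing $s:=\sqrt{H_R}\in(0,1)$, this collapses $f_{F,H_R}(H_*)$ into $\tfrac12(\sqrt{H_R}+1)^2 H_*$ times a quadratic in $H_*$ with positive leading coefficient $3-s-s^2>0$ and no real root: its discriminant is negative exactly when $s^2(1+s)^2<4(3-s-s^2)$, equivalently $12-4s-5s^2-2s^3-s^4>0$, which holds on $(0,1)$ since the left side is decreasing there and vanishes at $s=1$. Hence $f_{F,H_R}(H_*)>0$, so the third form in \eqref{aneq0} is strictly positive for $w\not\equiv 0$, all three terms share the same sign, and \eqref{aneq0} cannot hold. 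The expected difficulty is concentrated entirely in this closing algebraic step—recognizing that \eqref{identity} is precisely what makes the evaluated polynomial factor into a sign-definite quadratic—whereas the energy-estimate framework and the sign bookkeeping carry over directly from the smooth case.
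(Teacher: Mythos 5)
Your proposal is correct, and its overall skeleton coincides with the paper's proof: take the imaginary part of the half-line identity \eqref{L2equations} to get \eqref{aneq0}, observe that $-c_2$ and $-2f_3+\tfrac12 f_1^2$ are manifestly positive by \eqref{c2} and \eqref{sign1}, and reduce the remaining term to positivity of $f_{F,H_R}$ on $[H_*,\infty)$ via monotonicity past the critical point $H_c$ together with the endpoint bound $f_{F,H_R}(H_*)>0$. Where you genuinely diverge from the paper is in how the two supporting inequalities are proved. The paper relegates both $H_*>H_c$ and (via the auxiliary bound $H_*>F\sqrt{H_R}/\sqrt{2(\sqrt{H_R}+1)}$) the endpoint positivity to Appendix \ref{ineq}, where they are verified by direct polynomial estimates in the parametrization $H_R=1/\nu^2$, using explicitly the domain condition $F>H_R+\sqrt{H_R}$ of discontinuous profiles. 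You instead use the subshock identity \eqref{identity} to eliminate $F^2$ in both places: first, $H_c<F\sqrt{H_R}/(\sqrt{H_R}+1)=\sqrt{H_*(H_*+H_R)/2}<H_*$, the last step needing only $H_R<H_*$; second, substituting \eqref{identity} into $f_{F,H_R}(H_*)$ yields, with $s=\sqrt{H_R}$,
\begin{equation*}
f_{F,H_R}(H_*)=\tfrac12\,(s+1)^2 H_*\Bigl[(3-s-s^2)H_*^2-s^3(s+1)H_*+s^4\Bigr],
\end{equation*}
and the bracketed quadratic has positive leading coefficient and discriminant $s^4\bigl[s^2(1+s)^2-4(3-s-s^2)\bigr]<0$ on $s\in(0,1)$, since $12-4s-5s^2-2s^3-s^4$ is decreasing there and vanishes at $s=1$. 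I have checked these computations and they are correct. What your route buys is a substantially cleaner argument: all information about the discontinuous regime is absorbed into the Rankine--Hugoniot identity \eqref{identity} and the ordering $H_R<H_*<1$, so the heavy Appendix algebra and the explicit use of $F>H_R+\sqrt{H_R}$ disappear; what the paper's route buys is that its Appendix inequalities (notably $H_*>F\sqrt{H_R}/\sqrt{2(\sqrt{H_R}+1)}$) are stated in a form reused verbatim in the displayed chain of estimates in its own proof, whereas your version requires redoing that evaluation from scratch --- which you do, correctly.
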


\begin{proof}
	Let $\lambda=ia+b$ with real $a$, $b$ and $b\ge 0$. We first show $\lambda$ with non-vanishing $a$ is not an eigenvalue. By equation \eqref{aneq0}, it suffices to show $-c_2$, $-2f_3+\frac{1}{2}f_1^2$ and $-f_4+\frac{1}{2}f_1f_2+\frac{1}{2}f_1'$ have the same sign. Since for discontinuous hydraulic shock profiles $H_R<H_s<H_*<1$, by equations \eqref{c2}, \eqref{sign1}, we readily see $-c_2$, $-2f_3+\frac{1}{2}f_1^2$ are positive. By \eqref{defico}, it is then enough to show $f_{F,H_R}(H),H\ge H_*$ is also positive. We show in Appendix \ref{ineq}. that $H_*>H_c$ where $H_c$ defined in \eqref{Hcdef} is the positive critical point of $f_{F,H_R}$, hence 
\ba  
f_{F,H_R}(H)&>f_{F,H_R}(H_*)\\
&=2{\left(\sqrt{H_R}+1\right)}^2H_*^3-F^2H_R\left(\sqrt{H_R}+1\right)H_*+F^2{H_R}^2(1-H_*)\\
&=2{\left(\sqrt{H_R}+1\right)}^2H_*\Big(H_*^2-\frac{F^2H_R}{2{\left(\sqrt{H_R}+1\right)}}\Big)+F^2{H_R}^2(1-H_*)\\
&\ge F^2H_R^2(1-H_*)>0
\ea 
where the last inequality is because $H_*>F\sqrt{H_R}/\sqrt{2(\sqrt{H_R}+1)}$ (see Appendix \ref{ineq} for proof). 
\end{proof}

\subsection{Nonexistence of real eigenvalues}
It remains to show that $\lambda=b$ with $b>0$ real is not an eigenvalue.
Denote by $L$ the self-adjoint operator in \eqref{L} given by the $\lambda=0$ part of the lefthand side of \eqref{weq},
and 
\be\label{B2}
\mathcal{B}_2(\tilde{w},w):=c_1\bar{\tilde{w}}(0)\cdot w(0) -\langle \tilde{w}',w' \rangle - \Big\langle \tilde{w}, \Big(\frac{1}{4}f_2^2+\frac{1}{2}f_2'\Big)w\Big\rangle
\ee
the bilinear form induced on $\tilde{w},w\in H^1(\R^-)$ by $\mathcal{B}_2(\tilde{w},w):=\langle \tilde{w}, Lw\rangle_{L^2(\R^-)}$,
obtained by integration by parts under the boundary condition 
\be\label{0bc}
w'(0)=c_1w(0)
\ee
obtained by setting $\lambda=0$ in \eqref{cbc}.

\bl\label{bclem}
$c_1<\frac{1}{2}f_2(H_*)$.
\el

\begin{proof} 
See Appendix \ref{ineq}.
\end{proof}

\bc\label{halfnonneg}
The bilinear form $\mathcal{B}_2$ is negative definite.
\ec

\begin{proof}
Using again the relation \cite{He} between essential spectra of asymptotically constant-coefficient operators
and their constant-coefficient limits, we find by direct computation/analysis of the limiting Fourier symbols
that the essential spectrum of $L$ with
boundary condition \eqref{0bc} lies in $\{\Re \lambda <-\eta\leq 0\}$.
By standard calculus of variations arguments, we find therefore that either $\mathcal{B}_2$ is negative definite,
or else $\max_{|w|_{L^2(\R^-)}=1}\mathcal{B}_2(w)$ is achieved at a solution of the associated constrained
Euler-Lagrange equation, with Lagrange multiplier $\lambda$ equal to the maximum value (hence in particular real).
Moreover, the Euler-Lagrange equation is exactly the eigenvalue equation for $L$ with boundary condition \eqref{0bc}
and eigenvalue $\lambda$.
It is sufficient therefore to show that $L$ has no nonnegative eigenvalue with boundary condition \eqref{0bc},
or equivalently $Mu_2:=u_2''+ f_2 u_2'$ has no nonnegative eigenvalues with boundary condition 
\be\label{u2bc}
u_2'(0)=(c_1-f_2(0)/2) u_2(0).
	\ee

We first observe, using monotonicity of the traveling wave $H$ in a different way than in the whole-line case,
that $M$ {\it has no zero eigenvalue} for boundary condition \eqref{u2bc}, for any discontinuous hydraulic shock profile.
For, the fact that $H'$ satisfies $\bar h'+f_2 \bar h=0$ implies that $\bar u_2:= H- H_L$ is, up to a constant factor,
the unique decaying solution of $Mu_2=0$. But, by monotonicity,
$$
\frac{H'}{H-H_L} >0
$$
for all $H\in [H_*,H_L)$, in particular at $H_*=H(0)$.  Thus, $0<\bar u_2'(0)/\bar u_2(0)\neq c_1-f_2(0)/2$,
since, by Lemma \ref{bclem}, $c_1-f_2(0)/2<0$, and so there is no zero eigenfunction.
That is, as noted in the introduction, the tranlational eigenvalue at $\lambda=0$
of \eqref{syseval} has been removed by the change to ``integrated coordinates'' $(u_2, u_2')$.

More, we can modify the domain of $M$ from $(-\infty,0)$ to $(-\infty, x_0)$ for any $x_0<0$ while keeping the same 
boundary condition 
\be\label{u2modbc}
u_2'(x_0)=(c_1-f_2(0)/2)u_2(x_0),
\ee
and the same argument shows that there is no zero eigenvalue for
any choice of $x_0$.  Denote the operator acting on this modified domain by $M_{x_0}$.  
Nor are there nonzero pure imaginary eigenvalues of $M_{x_0}$, as can be seen by the Liouville transform to
a self-adjoint operator $L$ on the same domain. 
Thus, by a homotopy argument, the number of 
nonstable eigenvalues $\Re \lambda \geq 0$ of $M_{x_0}$ with boundary condition \eqref{u2modbc} is constant 
for all choices of endpoint $-\infty < x_0\leq 0$.
Shifting $x_0$ back to $0$, and taking the limit as $x_0\to -\infty$ of the associated Evans functions, we see that this number is equal to
the number of nonstable 
eigenvalues of the limiting constant-coefficient operator
$$
M_\infty u_2:= u_2'' + f_2(-\infty) u_2'
$$
with boundary condition \eqref{u2bc} imposed at $x=0$.  But this may be seen by direct calculation to be zero,
since decaying solutions for $\Re \lambda \geq 0$ are of form $e^{\mu x}$ with $\mu$ real and positive,
and so  $0< \mu =u_2'(0)/u_2(0)\neq c_1-f_2(0)/2<0$, contradicting existence of a decaying eigenfunction.

Thus, there are no nonnegative eigenvalues of $M$ with boundary condition \eqref{u2bc},  hence no 
nonnegative eigenvalue of $L$ with boundary condition \eqref{0bc},
and it follows that $\mathcal{B}_2(w)<0$ as claimed.
\end{proof}

\br\label{diffrmk}
The argument in the half-line case, though still based on monotonicity,
is essentially different from the standard principal eigenvalue argument used in the whole line case,
using a homotopy not available there.  So far as we know, this approach is new.
\er

\br\label{posrmk2}
Numerically, $\frac14 f_2^2+\frac12f_2'>\delta >0$, $c_1<0$, whence $\mathcal{B}_2$ is negative definite by inspection.
\er

\begin{corollary}\label{discontstab}
All discontinuous hydraulic shock profiles are weakly 
spectrally stable in the sense that system \eqref{syseval} has no eigenvalue $\lambda$ with $\Re \lambda \geq 0$
and $\lambda \neq 0$.
\end{corollary}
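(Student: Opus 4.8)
The plan is to assemble the pieces already in place. Weak spectral stability of discontinuous profiles amounts to excluding every $\lambda$ with $\Re\lambda\ge 0$, $\lambda\neq 0$, as an eigenvalue of the half-line problem \eqref{eigen-eq}, equivalently (through the coordinate changes $v\leftrightarrow u\leftrightarrow w$) of the scalar problem \eqref{weq} with boundary condition \eqref{cbc}. Lemma \ref{halfcomplex} already rules out all non-real such $\lambda$, including those on the imaginary axis. Hence it remains only to treat real $\lambda=b>0$, and for this I would exploit the negative definiteness of the form $\mathcal{B}_2$ furnished by Corollary \ref{halfnonneg}.

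Concretely, I would start from the $L^2(\R^-)$ identity \eqref{L2equations2}, obtained by pairing $w$ with \eqref{weq} at $\lambda=b$ and using the boundary condition \eqref{cbc}. Recognizing the $b$-independent portion together with the $c_1$ boundary term as $\mathcal{B}_2(w,w)$, I rewrite \eqref{L2equations2} as
\begin{equation}
\mathcal{B}_2(w,w)+c_2 b\,|w(0)|^2+\Big\langle w,\Big((f_3-\tfrac14 f_1^2)b^2+(f_4-\tfrac12 f_1 f_2-\tfrac12 f_1')b\Big)w\Big\rangle=0.
\end{equation}
The crux is then a sign count showing that, for $b>0$ and $w\not\equiv 0$, every term on the left is nonpositive while the first is strictly negative, so that the identity cannot hold. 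By Corollary \ref{halfnonneg}, $\mathcal{B}_2(w,w)<0$. Since $-c_2>0$ by \eqref{c2}, the boundary term satisfies $c_2 b\,|w(0)|^2\le 0$. Finally, along a discontinuous profile $H$ ranges over $[H_*,1)$, where \eqref{sign1} gives $f_3-\tfrac14 f_1^2<0$, and \eqref{defico} together with the positivity of $f_{F,H_R}$ on $\{H\ge H_*\}$ established inside Lemma \ref{halfcomplex} gives $f_4-\tfrac12 f_1 f_2-\tfrac12 f_1'<0$; hence the potential term is $\le 0$ as well. The sum is therefore strictly negative, the desired contradiction, so no real $b>0$ is an eigenvalue, and combined with Lemma \ref{halfcomplex} this excludes every $\lambda$ with $\Re\lambda\ge 0$, $\lambda\neq 0$.

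The main obstacle is not this final sign bookkeeping but the input it relies on, namely the negative definiteness of $\mathcal{B}_2$ in Corollary \ref{halfnonneg}. Unlike the whole-line case, one cannot invoke the principal-eigenvalue theorem directly, since the boundary carrying the $\lambda$-dependent Robin condition \eqref{cbc} obstructs the usual variational comparison; the substitute is the homotopy in the endpoint $x_0$ described in Remark \ref{diffrmk}, combined with the fact (from monotonicity of the wave and Lemma \ref{bclem}) that $M$ has no zero eigenvalue for the boundary condition \eqref{u2bc}. Granting Corollary \ref{halfnonneg}, the remaining work is purely the algebraic sign verification above, which pulls together \eqref{c2}, \eqref{sign1}, \eqref{defico}, and the profile inequality $H\ge H_*$.
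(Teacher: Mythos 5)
Your proposal is correct and follows essentially the same route as the paper: reduction to real $\lambda=b>0$ via Lemma \ref{halfcomplex}, then the half-line $L^2$ identity (your rearrangement of \eqref{L2equations2} is exactly the paper's \eqref{temp1}, with $\phi(b)=c_2b$, $\alpha=-(f_4-\tfrac12 f_1f_2-\tfrac12 f_1')$, $\beta=-(f_3-\tfrac14 f_1^2)$), contradicted by negative definiteness of $\mathcal{B}_2$ from Corollary \ref{halfnonneg}, $c_2<0$, and the sign conditions \eqref{sign1}, \eqref{defico} with $f_{F,H_R}>0$ on $H\ge H_*$. If anything, your write-up is slightly more explicit than the paper's about where the positivity of $\alpha$ and $\beta$ on the profile range $[H_*,1)$ comes from.
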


\begin{proof}
Lemma \ref{halfcomplex}, it is sufficient to consider real eigenvalues $\lambda = b>0$.
Taking the real part of the $L^2$ inner product of $w$ with $Lw= \alpha \lambda w + \beta \lambda^2 w$ on $\R^-$, we
obtain 
\be\label{temp1}
\phi(b)|w(0)|^2+ \mathcal{B}_2(w)= b\langle w, \alpha w\rangle+ b^2\langle w, \beta w\rangle \geq 0,
\ee
with equality only if $w\equiv 0$. By negative definiteness of $\mathcal{B}_2$, and negativity of
$\phi(b)$ ($c_2<0$), 
equation \eqref{temp1} never holds for $b>0$, proving the result.
\end{proof}

\appendix
\section{Proof of inequalities}\label{ineq}
Let $H_R=1/\nu^2$ with $\nu>1$. We establish the following inequalities.

\medskip

{\bf 1. $H_*>H_c$}:
\ba 
&&H_*&>H_c\\
&\Leftrightarrow& \frac{-\nu-1+\sqrt{8F^2\nu^4+\nu^2+2\nu+1}}{2\nu^2\left(\nu+1\right)}&>\frac{F\sqrt{\nu^2+\nu+1}}{\sqrt{6}\nu(\nu+1)}\\
&\Leftrightarrow&\sqrt{8F^2\nu^4+\nu^2+2\nu+1}&>\frac{2F\nu\sqrt{\nu^2+\nu+1}}{\sqrt{6}} +\nu+1\\
&\Leftrightarrow& 8F^2\nu^4&>\frac{2}{3}F^2(\nu^4+\nu^3+\nu^2)+\frac{4F\nu(\nu+1)\sqrt{\nu^2+\nu+1}}{\sqrt{6}}\\
&\Leftrightarrow&\frac{11}{3}F\nu^3-\frac{1}{3}F\nu^2-\frac{1}{3}F\nu&>\frac{2(\nu+1)}{\sqrt{6}}\sqrt{\nu^2+\nu+1}\\
&\Leftrightarrow& F^2\left(\frac{121}{9}\nu^6-\frac{22}{9}\nu^5-\frac{7}{3}\nu^4+\frac{2}{9}\nu^3+\frac{1}{9}\nu^2\right)&>\frac{2}{3}(\nu^2+2\nu+1)(\nu^2+\nu+1).
\ea 
Because on the domain of discontinuous hydraulic shock profiles, $F>\frac{1}{\nu^2}+\frac{1}{\nu}$. Thus, we have 
\ba 
&F^2\left(\frac{121}{9}\nu^6-\frac{22}{9}\nu^5-\frac{7}{3}\nu^4+\frac{2}{9}\nu^3+\frac{1}{9}\nu^2\right)-\frac{2}{3}(\nu^2+2\nu+1)(\nu^2+\nu+1)\\
>&\left(\frac{1}{\nu^2}+\frac{1}{\nu}\right)^2\left(\frac{121}{9}\nu^6-\frac{22}{9}\nu^5-\frac{7}{3}\nu^4+\frac{2}{9}\nu^3+\frac{1}{9}\nu^2\right)-\frac{2}{3}(\nu^2+2\nu+1)(\nu^2+\nu+1)\\
>&\left(\frac{1}{\nu^2}+\frac{1}{\nu}\right)^2\left(\frac{78}{9}\nu^6+\frac{2}{9}\nu^3+\frac{1}{9}\nu^2\right)-\frac{2}{3}(\nu^2+2\nu+1)(\nu^2+\nu+1)\\
=&\frac{{\left(\nu+1\right)}^2\left(72\nu^4-6\nu^3-6\nu^2+2\nu+1\right)}{9\nu^2}>0.
\ea 

\medskip

{\bf 2. $H_*>F\sqrt{H_R}/\sqrt{2(\sqrt{H_R}+1)}$}: 

\ba 
&&\frac{-\nu-1+\sqrt{8F^2\nu^4+\nu^2+2\nu+1}}{2\nu^2\left(\nu+1\right)}&>\frac{F}{\sqrt{2\nu(\nu+1)}}\\
&\Leftrightarrow& \sqrt{8F^2\nu^4+\nu^2+2\nu+1}&>F\sqrt{2\nu^3(\nu+1)}+\nu+1\\
&\Leftrightarrow& 3F\nu^3-F\nu^2&>(\nu+1)\sqrt{2\nu(\nu+1)}\\
&\Leftrightarrow& 9F^2\nu^6-6F^2\nu^5+F^2\nu^4&>2(\nu+1)^3\nu.
\ea 
Again, using $F>\frac{1}{\nu^2}+\frac{1}{\nu}$, we have 
\ba 
&9F^2\nu^6-6F^2\nu^5+F^2\nu^4-2(\nu+1)^3\nu>F^2(3\nu^6+\nu^4)-2(\nu+1)^3\nu\\
>&\left(\frac{1}{\nu^2}+\frac{1}{\nu}\right)^2(3\nu^6+\nu^4)-2(\nu+1)^3\nu=\left(\nu^2-1\right)^2>0.
\ea

\medskip

{\bf 3. $c_1<\frac{1}{2}f_2(H_*)$}:

It suffices to show 
\be
H_*{\left(H_R+\sqrt{H_R}+1\right)}^2-H_R\left(2F^2+1\right)>0
\ee
Let $\tilde{\nu}=\frac{1}{\nu}$, replace $H_*$, $H_R$ by $-\frac{\tilde{\nu}\,\left(\tilde{\nu}+\tilde{\nu}^2-\sqrt{8\,F^2+\tilde{\nu}^4+2\,\tilde{\nu}^3+\tilde{\nu}^2}\right)}{2\,\left(\tilde{\nu}+1\right)}$ and $\tilde{\nu}^2$, it then suffices to show for $\tilde{\nu}+\tilde{\nu}^2<F<2$, $0<\tilde{\nu}<1$:
\ba 
&&&(\tilde{\nu}^4+2\,\tilde{\nu}^3+3\,\tilde{\nu}^2+2\,\tilde{\nu}+1)\sqrt{8\,F^2+\tilde{\nu}^4+2\,\tilde{\nu}^3+\tilde{\nu}^2}\\
&&>&4\,F^2\,\tilde{\nu}^2+4\,F^2\,\tilde{\nu}+\tilde{\nu}^6+3\,\tilde{\nu}^5+5\,\tilde{\nu}^4+5\,\tilde{\nu}^3+5\,\tilde{\nu}^2+3\,\tilde{\nu}\\
&\Leftrightarrow&&\#(F,\tilde{\nu}):=-16\,F^4\,\tilde{\nu}^2\,{\left(\tilde{\nu}+1\right)}^2+F^2\,\left(16\,\tilde{\nu}^6+48\,\tilde{\nu}^5+72\,\tilde{\nu}^4+64\,\tilde{\nu}^3+56\,\tilde{\nu}^2+32\,\tilde{\nu}+8\right)\\
&&&-4\,\tilde{\nu}^2\,{\left(\tilde{\nu}+1\right)}^2\,\left(\tilde{\nu}^4+2\,\tilde{\nu}^3+3\,\tilde{\nu}^2+2\,\tilde{\nu}+2\right)>0.
\ea 
As a quadratic function of variable $F^2$, the axis of symmetry of $\#$ is always on the right half plane, we then examine values of $\#$ at end points $F=\tilde{\nu}+\tilde{\nu}^2$ and $F=2$.
\ba
\#(\tilde{\nu}+\tilde{\nu}^2,\tilde{\nu})&=4\,\tilde{\nu}^3\,\left(1-\tilde{\nu}\right)\,{\left(\tilde{\nu}+1\right)}^3\,\left(4\,\tilde{\nu}^5+16\,\tilde{\nu}^4+24\,\tilde{\nu}^3+20\,\tilde{\nu}^2+11\,\tilde{\nu}+6\right)>0,\\
\#(2,\tilde{\nu})&=4\,{\left(\tilde{\nu}^2+\tilde{\nu}-2\right)}^2\,\left(-\tilde{\nu}^4-2\,\tilde{\nu}^3+9\,\tilde{\nu}^2+10\,\tilde{\nu}+2\right)\\
&>4\,{\left(\tilde{\nu}^2+\tilde{\nu}-2\right)}^2\,\left(6\,\tilde{\nu}^2+10\,\tilde{\nu}+2\right)>0.
\ea
Therefore, 
\be
\#(F,v)>\min\left(\#(\tilde{\nu}+\tilde{\nu}^2,\tilde{\nu}),\#(2,\tilde{\nu})\right)>0.
\ee

\end{document}